\sloppy\pagestyle{plain}
\theoremstyle{definition}
\newtheorem{example}[equation]{Example}
\newtheorem*{example*}{Example}
\newtheorem{definition}[equation]{Definition}
\newtheorem{theorem}[equation]{Theorem}
\newtheorem{lemma}[equation]{Lemma}
\newtheorem{corollary}[equation]{Corollary}
\newtheorem*{conjecture*}{Conjecture}
\newtheorem{question}[equation]{Question}
\newtheorem*{question*}{Question}
\newtheorem*{problem*}{Problem}
\newtheorem*{theorem*}{Theorem}
\theoremstyle{remark}
\newtheorem{remark}[equation]{Remark}
\newtheorem*{remark*}{Remark}
\makeatletter\@addtoreset{equation}{section} \makeatother
\address{\emph{Ivan Cheltsov}
\newline
\textnormal{University of Edinburgh,  Scotland}
\newline
\textnormal{HSE University, Russian Federation}
\newline
\textnormal{\texttt{I.Cheltsov@ed.ac.uk}}}
\author{Ivan Cheltsov}
\title{Cylinders in rational surfaces}
\thanks{All varieties are assumed to be algebraic, projective and defined over $\mathbb{C}$.}
\begin{document}

\begin{abstract}
We answer a question of Ciro Ciliberto about cylinders in rational surfaces
which are obtained by blowing up the plane at points in general position.
\end{abstract}

\sloppy

\maketitle

\section{Introduction}
\label{section:intro}

Let $S$ be a  smooth rational surface.
A \emph{cylinder} in $S$ is an open subset $U\subset S$ such that
$U\cong\mathbb{C}^1\times Z$ for an affine curve $Z$. The surface $S$  contains many cylinders, and it seems hopeless to describe all of them.
Instead, one can consider a similar problem for \emph{polarized} surfaces (see \cite{KPZ11a,KPZ12a,KPZ12b,CheltsovParkWon1,CheltsovParkWon3,MarquandWon}).
To~describe it, fix an ample $\mathbb{Q}$-divisor $A$ on the surface $S$.

\begin{definition}
\label{definition:polar-cylinder}
An $A$-polar cylinder in $S$ is a Zariski open subset $U$ in $S$ such that
\begin{itemize}
\item[(C)] $U\cong\mathbb{C}^1\times Z$ for some affine curve $Z$, i.e., $U$ is a cylinder in $S$,
\item[(P)] there is an effective $\mathbb{Q}$-divisor $D$ on $S$ such that $D\sim_{\mathbb{Q}} A$ and $U=S\setminus\mathrm{Supp}(D)$.
\end{itemize}
\end{definition}

One can always choose an ample divisor $A$ such that $S$ contains an $A$-polar cylinder.
This follows from \cite[Proposition~3.13]{KPZ11a}.
One the other hand, we have

\begin{theorem}[{\cite{KPZ12b,CheltsovParkWon1,CheltsovParkWon3}}]
\label{theorem:del-Pezzo}
Let $S_d$ be a smooth del Pezzo surface of degree \mbox{$d=K_{S_d}^2$},
Then the following assertions hold:
\begin{enumerate}
\item The surface $S_d$ contains a $(-K_{S_d})$-polar cylinder if and only if $d\geqslant 4$.
\item If $d\geqslant 4$, then $S_d$ contains an $H$-polar cylinder for every ample $\mathbb{Q}$-divisor $H$ on $S_d$;
\item If $d=3$, then $S_d$ contains an $H$-polar cylinder for every ample $\mathbb{Q}$-divisor $H$ on $S_d$ such that $H\not\in\mathbb{Q}_{>0}[-K_{S_d}]$.
\end{enumerate}
\end{theorem}

The paper \cite{CheltsovParkWon3} also contains one relevant result for del Pezzo surfaces of degree $1$ and $2$.
To describe this result, let
$$
\mu_A=\mathrm{inf}\Big\{\lambda\in\mathbb{Q}_{>0}\ \Big|\ \text{the $\mathbb{Q}$-divisor}\ K_{S}+\lambda A\ \text{is pseudo-effective}\Big\}\in\mathbb{Q}.
$$
The number $\mu_A$ is known as the Fujita invariant, pseudo-effective threshold or the spectral value of the divisor $A$ (see \cite{LTT,Sakai}).
Let $\Delta_{A}$ be the smallest extremal face of the Mori cone $\overline{\mathbb{NE}(S)}$ that contains $K_{S}+\mu_A A$.
Denote by $r_A$ the dimension of the face $\Delta_{A}$.
Observe that $r_A=0$ if and only if $S$ is a smooth del Pezzo surface and $\mu_A A\sim_{\mathbb{Q}}-K_S$.
The number $r_A$ is known as the Fujita rank of the divisor~$A$ (see \cite{CheltsovParkWon3}).

\begin{theorem}[{\cite{CheltsovParkWon3}}]
\label{theorem:del-Pezzo-3}
Let $S_d$ be a smooth del Pezzo surface of degree~$d=K_{S_d}^2$,
let $H$ be an ample $\mathbb{Q}$-divisor on  $S_d$, and let $r_H$ be the  Fujita rank of the divisor $H$.
Suppose that $r_H+d\leqslant 3$.
Then $S_d$ does not contain $H$-polar cylinders.
\end{theorem}

During the conference \emph{Complex affine geometry, hyperbolicity and complex analysis},
which was held in Grenoble in October 2016, Ciro Ciliberto asked

\begin{question}
\label{question:Ciliberto}
Let $S$ be a rational surface that is obtained from $\mathbb{P}^2$ by blowing up points in general position,
and let $A$ be an ample $\mathbb{Q}$-divisor on $S$ such that  $r_A+K_S^2\leqslant 3$.
Is it true that $S$ does not contain $A$-polar cylinders?
\end{question}

Ciliberto also suggested to consider Question~\ref{question:Ciliberto} modulo \cite[Conjecture~2.3]{Ciliberto}.
In this paper, we show that the answer to Question~\ref{question:Ciliberto} is Yes. To be precise, we prove

\begin{theorem}
\label{theorem:main}
Let $S$ be a smooth rational surface that satisfies the following generality condition:
\begin{equation}
\label{equation:condition}\tag{$\bigstar$}
\text{the self-intersection of every smooth rational curve in $S$ is at least $-1$.}
\end{equation}
Let $A$ be an ample $\mathbb{Q}$-divisor on~$S$, and let $r_A$ be the Fujita rank of the divisor~$A$.
Suppose that $r_A+K_S^2\leqslant 3$.
Then $S$ does not contain $A$-polar cylinders.
\end{theorem}

By \cite[Proposition~2.4]{deFernex}, rational surfaces obtained by blowing up $\mathbb{P}^2$ at points in general position satisfy \eqref{equation:condition}.
Thus, the answer to Question~\ref{question:Ciliberto} is Yes.

\begin{remark}
\label{remark:generalization}
Smooth del Pezzo surfaces satisfy \eqref{equation:condition}.
Moreover, if $K_S^2\geqslant 1$, then the divisor $-K_S$ is ample if and only if $S$ satisfies \eqref{equation:condition}.
This shows that Theorem~\ref{theorem:main} is a generalization of Theorem~\ref{theorem:del-Pezzo-3}.
\end{remark}

By \cite[Corollary~3.2]{KPZ12a}, Theorem~\ref{theorem:main} implies

\begin{corollary}
\label{corollary:main}
Let $S$ be a smooth rational surface that satisfies \eqref{equation:condition},
let $A$ be an ample $\mathbb{Z}$-divisor on the surface $S$, let $r_A$ be the Fujita rank of the divisor~$A$, and let
$$
V=\mathrm{Spec}\Bigg(\bigoplus_{n\geqslant 0}H^0\Big(S,\mathcal{O}_S\big(nA\big)\Big)\Bigg).
$$
Suppose that $r_A+K_S^2\leqslant 3$. Then $V$ does not admit an effective action of the additive group $\mathbb{C}_{+}$.
\end{corollary}

The following example shows that the inequality $r_A+K_S^2\leqslant 3$ in Theorem~\ref{theorem:main} is sharp.

\begin{example}
\label{example:auxiliary}
Let $S$ be a rational surface that satisfies \eqref{equation:condition}.
Suppose that $K_S^2\leqslant 3$.
Then~there exists a blow down $f\colon S\to\mathbb{P}^2$ of $9-K_{S}^2$ different points.
Put~$k=4-K_{S}^2\geqslant 1$.
Let $E_1$, $E_2$, $E_3$, $E_4$, $E_5$, $G_1,\ldots,G_k$ be the exceptional curves of the blow up~$f$,
let~$\mathcal{C}$ be the unique conic in $\mathbb{P}^2$ that passes through $f(E_1)$, $f(E_2)$, $f(E_3)$, $f(E_4)$, $f(E_5)$,
let $L$ be a general line in $\mathbb{P}^2$ tangent to $\mathcal{C}$, and let $\mathcal{P}$ be the pencil generated by $\mathcal{C}$~and~$2L$.
Denote by $C_i$ the conic in $\mathcal{P}$ that contains $f(G_i)$.
Then $\mathbb{P}^{2}\setminus(\mathcal{C}\cup L\cup C_1\cup\cdots\cup C_k)$ is~a~cylinder.
Denote the proper transforms of $\mathcal{C}$ and $L$ on $S$ by $\widetilde{\mathcal{C}}$ and $\widetilde{L}$, respectively.
Similarly, denote by $\widetilde{C}_i$ the proper transform of the conic $C_i$ on the surface $S$.
Then
$$
S\setminus \Big(\widetilde{\mathcal{C}}\cup\widetilde{L}\cup E_1\cup\cdots\cup E_5\cup\widetilde{C}_1\cup\cdots\cup \widetilde{C}_k\cup G_1\cup\cdots\cup G_k\Big)\cong\mathbb{P}^{2}\setminus\Big(\mathcal{C}\cup L\cup C_1\cup\cdots\cup C_k\Big).
$$
Let $\epsilon_1$, $\epsilon_2$ and $x$ be rational numbers such that $\frac{1}{2}>\epsilon_1>\frac{\epsilon_2}{2}>0$ and $1>x>1-\frac{1-2\epsilon_1}{2k}$.
Let~$A=-K_{S}+x(G_1+\cdots+G_k)$. Then $A$ is ample and $r_A=k$, since
$$
A\sim_{\mathbb{Q}} \Big(1+\epsilon_1-\frac{\epsilon_2}{2}\Big)\widetilde{\mathcal{C}}+\epsilon_2\widetilde{L}+\Big(\epsilon_1-\frac{\epsilon_2}{2}\Big)\sum_{i=1}^{5}E_i+\frac{1-2\epsilon_1}{2k}\sum_{i=1}^{k}\widetilde{C}_i+\Big(x+\frac{1-2\epsilon_1}{2k}-1\Big)\sum_{i=1}^{k}G_i.
$$
Thus, the surface $S$ contains an $A$-polar cylinder, and $r_A+K_{S}^2=4$.
\end{example}

The following example shows that the inequality $r_A+K_S^2\geqslant 4$ does not always imply the existence of $A$-polar cylinders in $S$.

\begin{example}
\label{example:nine-points}
Let $f\colon S\to\mathbb{P}^2$ be a blow up of $9$ points such that $|-K_{S}|$ is a base point free pencil.
Suppose that all curves in the pencil $|-K_{S}|$ are irreducible.
Then $S$ satisfies \eqref{equation:condition}.
Suppose, in addition, that all singular curves in the pencil $|-K_{S}|$ do not have cusps.
Let~$E_1$, $E_2$, $E_3$ and $E_4$ be any four $f$-exceptional curves.
Fix $x\in\mathbb{Q}$ such that $0<x<1$.
Let $A=-K_S+x(E_1+E_2+E_3+E_4)$. Then $A$ is ample. Moreover, we have~$r_A=4$.
Furthermore, if $x>\frac{7}{8}$, then it follows from Example~\ref{example:auxiliary} that $S$ contains an $A$-polar cylinder.
On the other hand, the surface $S$ does not contain $A$-polar cylinders for $x\leqslant\frac{1}{4}$ by Lemmas~\ref{lemma:nine-points}, \ref{lemma:main-1} and \ref{lemma:main-2}.
\end{example}

The following examples shows that that we cannot omit \eqref{equation:condition} in Theorem~\ref{theorem:main}.

\begin{example}
\label{example:dP-2}
Let $L_1$ and $L_2$ be two distinct lines in $\mathbb{P}^2$. Then $\mathbb{P}^2\setminus\big(L_1\cup L_2\big)\cong\mathbb{C}^1\times\mathbb{C}^{\ast}$.
Let~$P_1$ be a point in $L_1\setminus L_2$. Let $P_{2}$, $P_3$, $P_4$, $P_5$, $P_6$ and $P_7$ be general points in $L_2\setminus L_1$.
Let~$f\colon\widehat{S}\to\mathbb{P}^2$ be the blow up of these seven points $P_{1}$, $P_{2}$, $P_3$, $P_4$, $P_5$, $P_6$ and $P_7$.
Denote~by $F_1$, $F_{2}$, $F_3$, $F_4$, $F_5$, $F_6$ and $F_7$ the $f$-exceptional curves such that $f(F_i)=P_i$.
Let $g\colon\widetilde{S}\to\widehat{S}$ be the blow up of the point in $F_1$ contained in the proper transform~of~$L_1$.
Denote by $G$ the $g$-exceptional~curve. Let $\widetilde{F}_1$ be the proper transform on $\widetilde{S}$ of the curve $F_1$.
Let $h\colon\overline{S}\to\widetilde{S}$ be the blow up of the point $\widetilde{F}_1\cap G$.
Denote by $H$ the $h$-exceptional~curve.
Let $e\colon\mathcal{S}\to\widetilde{S}$ be the blow up of a general point in $H$.
Denote by $\mathcal{E}$ the $e$-exceptional~curve.
Denote the proper transforms of the~curves $H$, $G$, $F_1$, $F_{2}$, $F_3$, $F_4$, $F_5$, $F_6$, $F_7$, $L_{1}$ and $L_{2}$~on the surface $\mathcal{S}$
by $\mathcal{H}$, $\mathcal{G}$, $\mathcal{F}_1$, $\mathcal{F}_2$, $\mathcal{F}_3$,
$\mathcal{F}_4$, $\mathcal{F}_5$, $\mathcal{F}_6$, $\mathcal{F}_7$, $\mathcal{L}_{1}$ and $\mathcal{L}_{2}$, respectively.
Fix a positive rational number $\epsilon$ such that $\epsilon<\frac{1}{3}$.
Then
$$
-K_{\mathcal{S}}\sim_{\mathbb{Q}} (2-\epsilon)\mathcal{L}_1+(1+\epsilon)\mathcal{L}_{2}+(1-\epsilon)\mathcal{F}_1+\epsilon\sum_{i=2}^{7}\mathcal{F}_i+(2-2\epsilon)\mathcal{G}+(2-3\epsilon)\mathcal{H}+(1-3\epsilon)\mathcal{E}.
$$
We also have $\mathcal{S}\setminus(\mathcal{L}_{1}\cup\mathcal{L}_{2}\cup\mathcal{F}_1\cup\cdots\cup\mathcal{F}_7\cup\mathcal{G}\cup\mathcal{H}\cup\mathcal{E})\cong\mathbb{P}^2\setminus(L_1\cup L_2)$.
Let $\pi\colon\mathcal{S}\to S$ be the contraction of the curves $\mathcal{L}_{1}$, $\mathcal{G}$ and $\mathcal{H}$.
Then $S$ is a smooth surface.
We have $K_{S}^2=2$, the divisor $-K_{S}$ is nef and big, and $\pi(\mathcal{F}_1)\cdot\pi(\mathcal{F}_1)=\pi(\mathcal{L}_2)\cdot\pi(\mathcal{L}_2)=-2$.
In~particular, the surface $S$ does not satisfy \eqref{equation:condition}.
Let $L_{12}$ be the line in $\mathbb{P}^2$ that contains $P_1$ and $P_2$, and let
$\mathcal{L}_{12}$ be its proper transform~on~$\mathcal{S}$.
Fix a positive rational number $x$ such that $\epsilon>x>3\epsilon-1$.
Then
\begin{multline*}
-K_{\mathcal{S}}+x\mathcal{L}_{12}\sim_{\mathbb{Q}} (2-\epsilon)\mathcal{L}_1+(1+\epsilon)\mathcal{L}_{2}+(1-\epsilon)\mathcal{F}_1+(\epsilon-x)\mathcal{F}_2+\\
+\epsilon\Big(\mathcal{F}_3+\mathcal{F}_4+\mathcal{F}_5+\mathcal{F}_6+\mathcal{F}_7\Big)+(2+x-2\epsilon)\mathcal{G}+(2+x-3\epsilon)\mathcal{H}+(1+x-3\epsilon)\mathcal{E}.
\end{multline*}
Let $A=-K_S+x\pi(\mathcal{L}_{12})$. Then the divisor $A$ is ample and $r_A=1$, so that $r_A+K_S^2=3$.
On the other hand, the surface $S$ contains an $A$-polar cylinder, since
$$
A\sim_{\mathbb{Q}} (1+\epsilon)\pi\big(\mathcal{L}_{2}\big)+(1-\epsilon)\pi\big(\mathcal{F}_1\big)+(\epsilon-x)\pi\big(\mathcal{F}_2\big)+\epsilon\sum_{i=3}^{7}\pi\big(\mathcal{F}_i\big)+(1+x-3\epsilon)\pi\big(\mathcal{E}\big),
$$
and $S\setminus(\pi(\mathcal{L}_{2})\cup\pi(\mathcal{F}_1)\cup\cdots\cup\pi(\mathcal{F}_7)\cup\pi(\mathcal{E}))\cong\mathbb{C}^1\times\mathbb{C}^{\ast}$.
\end{example}

Let us describe the structure of this paper.
In Section~\ref{section:preliminaries}, we present results that are used in the proof of Theorem~\ref{theorem:main}.
In Section~\ref{section:three-lemmas}, we prove three lemmas that constitute the main part of the proof of Theorem~\ref{section:three-lemmas}.
In Section~\ref{section:proof}, we finish the proof of Theorem~\ref{theorem:main}.

\smallskip
\textbf{Acknowledgments.}
The author is grateful to Ciro Ciliberto for asking Question~\ref{question:Ciliberto}.
This work was partially supported within the framework of the HSE University Basic Research Program and the Russian Academic Excellence Project "5-100".

\section{Preliminaries}
\label{section:preliminaries}

Let $S$ be a smooth rational~surface,
and let $C_1,\ldots,C_n$ be irreducible curves on the surface $S$.
Fix non-negative rational numbers $\lambda_1,\ldots,\lambda_n$.
Let $D=\lambda_1C_1+\cdots+\lambda_n C_n$.
For consistency, we will use these notation throughout the whole paper.
In this section, we present a few of  well-known (local and global) results about $S$ and $D$
that will be used in the proof of Theorem~\ref{theorem:main}.
We start with

\begin{lemma}[{\cite[Theorem~4.57(2)]{KollarMori98}}]
\label{lemma:mult-1}
Let $P$ be a point in $S$.
Suppose the singularities of the log pair $(S,D)$ are not log canonical at $P$.
Then $\mathrm{mult}_P(D)>1$.
\end{lemma}

The following lemma is a special case of a much more general result, known as the inversion of adjunction (see \cite[Theorem~5.50]{KollarMori98}).

\begin{lemma}[{\cite[Corollary~5.57]{KollarMori98}}]
\label{lemma:inversion-of-adjunction}
Let $P$ be a smooth point of the curve $C_1$.
Suppose that $\lambda_1\leqslant 1$, and the log pair $(S,D)$ is not log canonical at $P$.
Let $\Delta=\lambda_2C_2+\cdots+\lambda_n C_n$.
Then
$C_1\cdot\Delta\geqslant(C_1\cdot\Delta)_P>1$.
\end{lemma}

We will also use the following (local) result.

\begin{lemma}[{\cite[Theorem~13]{Cheltsov}}]
\label{lemma:Vanya}
Let $P$ be a point in $C_1\cap C_2$.
Suppose that $\lambda_1\leqslant 1$ and $\lambda_2\leqslant 1$,
the curves $C_1$ and $C_2$ are smooth at $P$ and intersect transversally at $P$,
and the log pair $(S,D)$ is not log canonical at $P$.
Let $\Delta=\lambda_3C_3+\cdots+\lambda_n C_n$.
If $\mathrm{mult}_{P}(\Delta)\leqslant 1$, then
$(C_1\cdot\Delta)_P>1-\lambda_2$ or $(C_2\cdot\Delta)_P>1-\lambda_1$.
\end{lemma}

The following result has been used in Example~\ref{example:nine-points}.

\begin{lemma}
\label{lemma:nine-points}
In the assumptions and notation of Example~\ref{example:nine-points}, suppose that $D\sim_{\mathbb{Q}} A$ and \mbox{$x\leqslant\frac{1}{4}$}.
Then the log pair $(S,D)$ is log canonical.
\end{lemma}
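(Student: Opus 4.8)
The plan is to argue by contradiction: assume that $(S,D)$ is \emph{not} log canonical at some point $P\in S$, and derive a contradiction from the single numerical inequality coming from $x\leqslant\frac14$. The relevant structure is that $S$ is a rational elliptic surface: blowing up nine points gives $K_S^2=0$, and the base point freeness of $|-K_S|$ makes $-K_S$ nef and turns the pencil $|-K_S|$ into an elliptic fibration $\phi\colon S\to\mathbb{P}^1$ whose fibres are exactly the members of $|-K_S|$. By the hypotheses of Example~\ref{example:nine-points} every fibre is irreducible and every singular fibre has at worst a node. Each exceptional curve $E_i$ satisfies $-K_S\cdot E_i=1$, so the $E_i$ are sections of $\phi$, and the central input is
$$
D\cdot(-K_S)=A\cdot(-K_S)=(-K_S)^2+x\sum_{i=1}^4(-K_S)\cdot E_i=4x\leqslant 1,
$$
the last inequality being exactly $x\leqslant\frac14$.

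Next I would record that every coefficient of $D$ is at most $1$, so that it suffices to contradict non-log-canonicity at a \emph{point} $P$. For a general fibre $F$ any horizontal component $C$ of $D$ has $C\cdot F\geqslant 1$, so the total coefficient of the horizontal part is bounded by $D\cdot F=4x\leqslant 1$. Now let $Z\in|-K_S|$ be the unique fibre through $P$ and write $D=aZ+\Omega$ with $Z\not\subset\mathrm{Supp}(\Omega)$ and $\Omega\geqslant 0$; then $\Omega\sim_{\mathbb{Q}}(1-a)(-K_S)+x\sum E_i$, which gives $\Omega\cdot Z=4x$ independently of $a$. Writing $\lambda_j$ for the coefficient of $E_j$ in $D$, intersecting $\Omega$ with each section yields $(1-a)-x=\Omega\cdot E_j\geqslant-\lambda_j$, i.e. $\lambda_j\geqslant a-1+x$; summing over $j=1,\dots,4$ and using $\sum_j\lambda_j\leqslant D\cdot(-K_S)=4x$ forces $a\leqslant 1$.

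If $Z$ is smooth at $P$, the argument closes quickly. Since $a\leqslant 1$ the pair $(S,Z+\Omega)$ is still not log canonical at $P$, and inversion of adjunction along $Z$ then forces $(\Omega\cdot Z)_P>1$, contradicting $\Omega\cdot Z=4x\leqslant 1$. (If $Z\not\subset\mathrm{Supp}(D)$ this is even more direct: non-log-canonicity gives $\mathrm{mult}_P(D)>1$, whence $(D\cdot Z)_P\geqslant\mathrm{mult}_P(D)\,\mathrm{mult}_P(Z)>1$, again contradicting $D\cdot Z=4x\leqslant 1$.)

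The remaining case, and the one I expect to demand the most care, is when $P$ is a singular point of $Z$; by the no-cusp hypothesis this is an ordinary node, and this is precisely where that hypothesis is essential. I would first note that no section $E_i$ passes through $P$, since a curve through a node of $Z$ meets $Z$ at $P$ with local multiplicity at least $2$, whereas $E_i\cdot Z=1$. This, together with $(\Omega\cdot Z)_P\geqslant 2\,\mathrm{mult}_P(\Omega)$ and $(\Omega\cdot Z)_P\leqslant\Omega\cdot Z=4x$, gives the sharpened bound $\mathrm{mult}_P(\Omega)\leqslant 2x\leqslant\frac12$. I would then blow up the node, $\pi\colon\widetilde S\to S$ with exceptional curve $E_0$; because the node is ordinary, the strict transform $\widetilde Z$ is smooth and meets $E_0$ transversally in two points, and the coefficient along $E_0$ is $2a+\mathrm{mult}_P(\Omega)-1$. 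The crux is to rule out $2a+\mathrm{mult}_P(\Omega)>2$ and to verify log canonicity of the transformed pair over $P$: in the extremal range where $a$ is close to $1$ one uses that $\Omega\cdot Z=4x$ together with the sections being disjoint from $P$ forces $\Omega$ to avoid $P$ entirely, so that after this single blow-up the only curves near the two points of $\widetilde Z\cap E_0$ are $\widetilde Z$ and $E_0$ crossing transversally with coefficients $\leqslant 1$. Reconciling the a priori possibility $2a+\mathrm{mult}_P(\Omega)>2$ with these geometric constraints is the delicate point of the proof.
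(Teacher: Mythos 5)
Your smooth-point case and the preliminary reductions in the nodal case (no section through the node, $m=\mathrm{mult}_P(\Omega)\leqslant 2x$, blowing up the node with log discrepancy coefficient $2a+m-1$ on $E_0$) coincide with the paper's proof. But the nodal case has a genuine gap, which you have partly flagged yourself. The paper's argument hinges on a step you are missing: \emph{before} blowing up, it replaces $D$ by $(1+\mu)D-\mu\bigl(C_1+x(E_1+E_2+E_3+E_4)\bigr)$, where $C_1=Z$ is the fibre through $P$ and $\mu>0$ is maximal with this divisor effective. Since the pair $\bigl(S,\,Z+x(E_1+\cdots+E_4)\bigr)$ is log canonical (this is where the no-cusp hypothesis enters), the new divisor is still $\mathbb{Q}$-linearly equivalent to $A$ and still non-log-canonical at $P$, and now some $E_j$ has dropped out of $\mathrm{Supp}(\Omega)$ (the case where $Z$ itself drops out is excluded because $a=0$ would give $\mathrm{mult}_P(D)\leqslant\Omega\cdot Z=4x\leqslant 1$). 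Intersecting with that $E_j$ yields $1-x=E_j\cdot D\geqslant a$, i.e. $a\leqslant 1-x$ --- strictly stronger than the bound $a\leqslant 1$ you obtain by summing your section inequalities over all four $E_j$, and it is exactly this stronger bound that the paper needs, twice: once to get $2a+m-1\leqslant 1$, and once more at the final step.

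Your fallback claim --- that for $a$ close to $1$ the divisor $\Omega$ must avoid $P$ entirely --- is unjustified and false in general: $\Omega\cdot Z=4x$ is independent of $a$, and $\Omega$ may well contain a horizontal curve through the node with small positive coefficient. (In fact your own inequalities can be pushed further than you noticed: any horizontal component $C$ through the node satisfies $\mathrm{mult}_P(C)\leqslant\frac{1}{2}\,C\cdot(-K_S)$, and since $\lambda_j\geqslant a-1+x$ for each section, the non-section horizontal weight is at most $4(1-a)$; hence $m\leqslant 2(1-a)$, which already settles your stated crux $2a+m\leqslant 2$.) But even granting $2a+m-1\leqslant 1$, the argument does not close. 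At the non-lc point $Q\in\widetilde{Z}\cap E_0$ the paper applies the local inequality of \cite[Theorem~13]{Cheltsov}, whose two alternatives are $4x-2m=\widetilde{\Omega}\cdot\widetilde{Z}>2\bigl(2-2a-m\bigr)$ or $m=\widetilde{\Omega}\cdot E_0>2(1-a)$; the second is contradicted by $m\leqslant 2(1-a)$, but the first simplifies to $a>1-x$, so it is contradictory precisely when $a\leqslant 1-x$. With only $a\leqslant 1$ the entire regime $1-x<a\leqslant 1$ remains open: taking $a=1-\varepsilon$ and $m\leqslant 2\varepsilon$ makes the right-hand side $2(2-2a-m)$ arbitrarily small while the left-hand side stays near $4x$, and nothing in your proposal rules this out. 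The missing idea is the paper's convex perturbation removing one exceptional section from the support; without it, or an equivalent mechanism forcing $a\leqslant 1-x$, the proof cannot be completed along your lines.
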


\begin{proof}
Suppose that $(S,D)$ is not log canonical at some point $P\in S$.
Let $\mathcal{C}$ be the curve in the pencil $|-K_{S}|$ that contains $P$.
By assumption, the curve $\mathcal{C}$ is irreducible.
Moreover, its arithmetic genus is $1$, so that either it is smooth or it has one simple node,
because we assume that curves in the pencil $|-K_{S}|$ do not have cusps.

If $\mathcal{C}$ is not contained in $\mathrm{Supp}(D)$, then
$1\geqslant 4x=C_1\cdot\Delta\geqslant\mathrm{mult}_{P}(D)>1$ by Lemma~\ref{lemma:mult-1}.
This shows that $\mathcal{C}$ is contained in the support of the divisor $D$.
Without loss of generality, we may assume that $\mathcal{C}=C_1$ and $\lambda_1>1$.
Let $\Delta=\lambda_2C_2+\cdots+\lambda_n C_n$.

We claim that $\lambda_1<1$. Indeed, we have $C_1+x\big(E_1+E_2+E_3+E_4\big)\sim_{\mathbb{Q}}\lambda_1C_1+\Delta$,
and the intersection form of the curves $E_1$, $E_2$, $E_3$, $E_4$ is negative definite.
Thus, if~$\lambda_1\geqslant 1$, then $\lambda_1=1$ and $\Delta=x(E_1+E_2+E_3+E_4)$,
which is impossible, because the singularities of the log pair $(S,C_1+x(E_1+E_2+E_3+E_4))$ are log canonical,
since $C_1$ is either smooth or has one simple node (by assumption).

If $C_1$ is smooth at $P$, then $1\geqslant 4x=C_1\cdot\Delta\geqslant(C_1\cdot\Delta)_{P}>1$ by Lemma~\ref{lemma:inversion-of-adjunction},
so that the curve $C_1$ has a simple node at the point $P$. This implies that $P\not\in E_1\cup E_2\cup E_3\cup E_4$,
because $\mathcal{C}\cdot E_i=-K_S\cdot E_i=1$ for every $i$.

We may assume that one of the curves $E_1$, $E_2$, $E_3$, $E_4$ is not contained in $\mathrm{Supp}(\Delta)$,
since otherwise we can swap $D$ with the divisor
$(1+\mu)D-\mu(C_1+x\big(E_1+E_2+E_3+E_4))$ for an appropriate positive rational number $\mu$.
Without loss of generality, we may assume that $E_4\not\subset\mathrm{Supp}(\Delta)$. 
Then $1-x=E_4\cdot(\lambda_1C_1+\Delta)=\lambda_1+E_4\cdot\Delta\geqslant\lambda_1$.

Let $m=\mathrm{mult}_{P}(\Delta)$. Then $4x=C_1\cdot\Delta\geqslant 2m$, so that $m\leqslant 2x$.

Let $f\colon\widetilde{S}\to S$ be the blow up of the point $P$. Denote by $F$ the $f$-exceptional curve.
Denote by $\widetilde{C}_1$ and $\widetilde{\Delta}$ the proper transforms on $\widetilde{S}$ of the divisors $C_1$ and $\Delta$, respectively.
Then $(\widetilde{S},\lambda_1\widetilde{C}_1+\widetilde{\Delta}+(2\lambda_1+m-1)F)$ is not log canonical at some point $Q\in F$, since
$$
K_{\widetilde{S}}+\lambda_1\widetilde{C}_1+\widetilde{\Delta}+\big(2\lambda_1+m-1)F\sim_{\mathbb{Q}} f^{*}\big(K_S+D\big).
$$
Moreover, we have $2\lambda_1+m-1\leqslant 1$, since we already proved that $\lambda_1\leqslant 1-x$ and $m\leqslant 2x$.

If $Q\not\in\widetilde{C}_1$, then $(\widetilde{S},\widetilde{\Delta}+F)$ is not log canonical at $Q$, so that
$\frac{1}{2}\geqslant 2x\geqslant m=F\cdot\widetilde{\Delta}>1$ by Lemma~\ref{lemma:inversion-of-adjunction}. 
This shows that $Q\in\widetilde{C}_1$.

The curve $\widetilde{C}_1$ is smooth and intersects $F$ transversally at $Q$.
We know that $m\leqslant 2x\leqslant 1$.
Thus, we can apply Lemma~\ref{lemma:Vanya} to the log pair $(\widetilde{S},\lambda_1\widetilde{C}_1+\widetilde{\Delta}+(2\lambda_1+m-1)F)$.
Then
$$
4x-2m=\widetilde{\Delta}\cdot\widetilde{C}_1>2\Big(1-\big(2\lambda_1+m-1\big)\Big)
$$
or
$m=\widetilde{\Delta}\cdot F>2(1-\lambda_1)$.
This leads to a contradiction, since $m\leqslant 2x$ and $\lambda_1\leqslant 1-x$.
\end{proof}

In the proof of Theorem~\ref{theorem:main}, we will use the following (global) result.

\begin{theorem}[{\cite[Theorem~1.12]{{CheltsovParkWon1}}}]
\label{theorem:del-Pezzo-tigers}
Suppose that $S$ is a smooth del Pezzo surface such that $K_S^2\leqslant 3$, and $D\sim_{\mathbb{Q}}-K_{S}$.
Let $P$ be a point in $S$. Suppose that $(S,D)$ is not log canonical at the point~$P$.
Then the linear system $|-K_{S}|$ contains a unique curve $T$ such that $(S,T)$ is not log canonical at $P$.
Moreover, the support of the divisor $D$ contains all the irreducible components of the curve $T$.
\end{theorem}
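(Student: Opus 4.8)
The plan is to extract everything from the single observation that on a smooth surface non-log-canonicity at $P$ forces large local multiplicity: since $(S,D)$ is not log canonical at $P$ we have $\mathrm{mult}_P(D)>1$, and likewise any member $T\in|-K_S|$ witnessing non-log-canonicity at $P$ must satisfy $\mathrm{mult}_P(T)\geqslant 2$ (a member smooth at $P$ gives a log smooth, hence log canonical, pair there). So the whole statement reduces to understanding anticanonical members \emph{singular at $P$}. The model to keep in mind is $K_S^2=3$, where $S\subset\mathbb{P}^3$ is a cubic surface and $|-K_S|$ is cut out by hyperplanes: a member singular at $P$ is exactly the tangent hyperplane section $T=T_PS\cap S$, which is the unique such member, so in degree $3$ both existence and uniqueness of the candidate are immediate from the geometry of the anticanonical embedding.

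For uniqueness in general I would argue by intersection theory. Suppose $T_1\neq T_2$ in $|-K_S|$ are both singular at $P$. If they share no component, then $T_1\cdot T_2=(-K_S)^2=K_S^2\leqslant 3$, while $(T_1\cdot T_2)_P\geqslant\mathrm{mult}_P(T_1)\,\mathrm{mult}_P(T_2)\geqslant 4$, which is absurd since $(T_1\cdot T_2)_P\leqslant T_1\cdot T_2$. If they share a common part $B$, I would pass to the residual divisors $A_i=T_i-B$, which are linearly equivalent, effective, and share no component, and bound $(-K_S-B)^2$ against the residual local multiplicities at $P$; here the inequality $K_S^2\leqslant 3$ together with the del Pezzo structure (control of the negative curves appearing in $B$) is what closes the estimate. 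This shows $|-K_S|$ contains at most one member singular at $P$, hence at most one non-log-canonical member.

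It remains to produce $T$, to show it is actually non-log-canonical at $P$, and to establish $\mathrm{Supp}(T)\subseteq\mathrm{Supp}(D)$; this is the heart of the matter. The candidate $T$ should be produced from the local structure of $D$ at $P$: passing to $\lambda=\mathrm{lct}_P(S,D)<1$ and the minimal non-klt centre of $(S,\lambda D)$, one can lift a section of $\mathcal{O}_S(-K_S)$ concentrated at $P$, the relevant positivity being $-2K_S-\lambda D\sim_{\mathbb{Q}}(2-\lambda)(-K_S)$, which is ample, so that the obstruction $H^1$ vanishes. For the support containment and the upgrade from ``singular'' to ``non-log-canonical'' I would transfer the non-log-canonicity of $D$ onto the curves through $P$ by inversion of adjunction, exactly as in the proof of Lemma~\ref{lemma:nine-points}: for an irreducible $\Gamma\subseteq\mathrm{Supp}(T)$ through $P$ with $\Gamma\not\subseteq\mathrm{Supp}(D)$ one gets $-K_S\cdot\Gamma=D\cdot\Gamma\geqslant\mathrm{mult}_P(D)\,\mathrm{mult}_P(\Gamma)>\mathrm{mult}_P(\Gamma)$, which already forces every line through $P$ in $T$ to lie in $\mathrm{Supp}(D)$, and an adjunction argument on the components through $P$, bounded using $K_S^2\leqslant 3$, handles the remaining components and pins down the singularity type of $T$ at $P$.

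I expect the main obstacle to be this last step in low degree. For $K_S^2\leqslant 2$ anticanonical members singular at a given point are scarce (for a degree-$2$ surface every singular member of $|-K_S|$ is singular along the ramification curve of the double cover $S\to\mathbb{P}^2$, and for degree $1$ one works with the anticanonical pencil and its base point), so establishing existence amounts to showing that a non-log-canonical point $P$ of some $D\sim_{\mathbb{Q}}-K_S$ can only occur where such a singular member exists, and then matching its components to $\mathrm{Supp}(D)$. This forces a genuine case analysis over $K_S^2\in\{1,2,3\}$ and over the possible configurations of negative curves and anticanonical members through $P$, carried out with the inversion-of-adjunction estimates already illustrated in Lemma~\ref{lemma:nine-points}.
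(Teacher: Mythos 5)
First, a point of order: the paper does not prove this statement at all --- it is imported verbatim from \cite[Theorem~1.12]{CheltsovParkWon1}, where its proof is the main technical content of that paper, carried out by a lengthy case analysis over $K_S^2\in\{1,2,3\}$ and over the possible singular anticanonical members through $P$. So your attempt must be measured against that proof.

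Your uniqueness reduction is essentially right (non-log-canonicity forces $\mathrm{mult}_P\geqslant 2$, and two distinct such members without common components would give $(T_1\cdot T_2)_P\geqslant 4>K_S^2$; the common-component case you only sketch, but it is genuinely closable). The gap is in the existence step, and it is not a technicality. Nadel vanishing with $L=-2K_S$ does give surjectivity of $H^0(S,\mathcal{O}_S(-K_S))$ onto sections of $-K_S$ on the non-klt subscheme of $(S,\lambda D)$, hence a member of $|-K_S|$ passing through $P$ with some multiplicity --- but nothing in that construction forces $(S,T)$ to be \emph{non-log-canonical} at $P$. This distinction is the whole theorem: at a general point of a cubic surface the tangent hyperplane section is merely nodal at $P$, and then $(S,T)$ \emph{is} log canonical there, so ``singular at $P$'' is strictly weaker than what you must produce. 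The actual content of \cite[Theorem~1.12]{CheltsovParkWon1} is the converse-type assertion that if the unique anticanonical member singular at $P$ has only a node (more generally, if $(S,T)$ is log canonical at $P$), then \emph{no} effective $D\sim_{\mathbb{Q}}-K_S$ can fail log canonicity at $P$; this is proved by multi-step blow-up arguments with inversion of adjunction in the style of \cite[Theorem~13]{Cheltsov} --- exactly the machinery visible in Lemma~\ref{lemma:nine-points} of this paper, but iterated through a full classification of tangent sections (cusp, tacnode, Eckardt point, and the degree $1$, $2$ analogues). Likewise, your multiplicity estimate $-K_S\cdot\Gamma\geqslant\mathrm{mult}_P(D)\,\mathrm{mult}_P(\Gamma)$ settles support containment only for lines through $P$; for the cuspidal cubic or the conic in a tacnodal member it yields no contradiction, and the published proof instead combines uniqueness of $T$ with the perturbation $D\mapsto(1+\mu)D-\mu T$ (the same convexity trick this paper uses in Lemmas~\ref{lemma:1}--\ref{lemma:3}). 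In short: your proposal is a sound roadmap of the standard toolkit, but the two decisive steps --- producing a non-log-canonical $T$ rather than a singular one, and the support containment for higher-degree components --- are exactly the parts you defer to an unspecified case analysis, so the proof as written does not go through.
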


Let~$U=S\setminus(C_1\cup\cdots\cup C_n)$.
Suppose that
$U\cong\mathbb{C}^1\times Z$ for an affine curve $Z$.

\begin{lemma}
\label{lemma:main-1}
One has $n\geqslant 10-K_S^2$.
\end{lemma}

\begin{proof}
This follows from the proof of \cite[Lemma~4.11]{KPZ11a}.
\end{proof}

The embeddings $Z\hookrightarrow\mathbb{P}^1$ and $\mathbb{C}^1\hookrightarrow\mathbb{P}^1$ induce a commutative diagram
$$
\xymatrix{\mathbb{P}^1\times\mathbb{P}^1\ar[dd]^{\bar{p}_{2}}&\mathbb{C}^1\times\mathbb{P}^1\ar@{_{(}->}[l]\ar[dd]^{p_{2}}&~\mathbb{C}^1\times Z\cong U\ar@{_{(}->}[l]\ar[d]^{p_Z}\ar@{^{(}->}[r] &S\ar@{-->}^{\psi}[dd]&& \\
&&Z\ar@{^{(}->}[rd] \ar@{_{(}->}[dl]&&&\mathcal{S}\ar[llu]_{\pi}\ar[dll]^{\phi} \\
\mathbb{P}^1\ar@{=}[r]&\mathbb{P}^1\ar@{=}[rr] &&\mathbb{P}^1 & &}
$$
where $p_Z$, $p_{2}$ and $\bar{p}_2$ are projections to the second factors, $\psi$ is the map induced by $p_Z$,
the~map $\pi$~is a birational morphism resolving the indeterminacy of $\psi$, and $\phi$ is a morphism.
Let $\mathcal{E}_1,\ldots,\mathcal{E}_m$ be the $\pi$-exceptional curves (if $\pi$ is an isomorphism, we let $m=0$).
Let $C$ be the section of the projection $\bar{p}_2$, which is the complement of~\mbox{$\mathbb{C}^1\times\mathbb{P}^1$}~in~$\mathbb{P}^1\times\mathbb{P}^1$.
Denote by $\mathcal{C}_1,\ldots,\mathcal{C}_n$ the proper transforms on $\mathcal{S}$ of the curves $C_1,\ldots,C_n$, respectively.
Similarly, denote by $\mathcal{C}$ the proper transform of the curve $C$ on the surface~$\mathcal{S}$.

\begin{lemma}
\label{lemma:main-2}
Suppose that $K_S+D$ is pseudo-effective, and $\lambda_i<2$ for every $i$.
Then $\pi(\mathcal{C})$ is a point, and $(S,D)$ is not log canonical at $\pi(\mathcal{C})$.
\end{lemma}

\begin{proof}
By construction, a general fiber of the morphism $\phi$ is a smooth rational curve, and the curve $\mathcal{C}$ is its section.
Then $\mathcal{C}$ is either one of the curves $\mathcal{C}_1,\ldots,\mathcal{C}_n$ or one of the curves $\mathcal{E}_1,\ldots,\mathcal{E}_m$.
All the other curves among $\mathcal{C}_1,\ldots,\mathcal{C}_n$ and $\mathcal{E}_1,\ldots,\mathcal{E}_m$ are mapped by $\phi$ to points in $\mathbb{P}^1$.
Thus, without loss of generality, we may assume that $\mathcal{C}=\mathcal{C}_1$ or $\mathcal{C}=\mathcal{E}_m$.

There are rational numbers $\mu_1,\ldots,\mu_m$ such that
$$
K_{\mathcal{S}}+\sum_{i=1}^n\lambda_i\mathcal{C}_i+\sum_{i=1}^m\mu_i\mathcal{E}_i= \pi^*\big(K_{S}+D\big).
$$
Let $\mathcal{F}$ be a general fiber of the morphism $\phi$. If $\mathcal{C}=\mathcal{C}_1$, then
$$
-2+\lambda_1=\Big(K_{\mathcal{S}}+\sum_{i=1}^n\lambda_i\mathcal{C}_i+\sum_{i=1}^m\mu_i\mathcal{E}_i\Big)\cdot\mathcal{F}=\pi^*\big(K_{S}+D\big)\cdot\mathcal{F}=\big(K_{S}+D\big)\cdot\pi\big(\mathcal{F}\big)\geqslant 0,
$$
because $K_S+D$ is pseudo-effective. Thus, in this case $\lambda_1>2$, which is impossible by assumption.
Hence, we conclude that $\mathcal{C}=E_m$, so that $\pi(\mathcal{C})$ is a point.
Then
$$
-2+\mu_m=\Big(K_{\mathcal{S}}+\sum_{i=1}^n\lambda_i\mathcal{C}_i+\sum_{i=1}^m\mu_i\mathcal{E}_i\Big)\cdot\mathcal{F}=\pi^*\big(K_{S}+D\big)\cdot\mathcal{F}=\big(K_{S}+D\big)\cdot\pi\big(\mathcal{F}\big)\geqslant 0,
$$
because the divisor $K_S+D$ is pseudo-effective.
This shows that the singularities of the log pair $(S,D)$ are not log canonical at the point $\pi(\mathcal{C})$.
\end{proof}

\section{Three main lemmas}
\label{section:three-lemmas}

In this section, we prove three results that will be used later in the proof of Theorem~\ref{theorem:main} in Section~\ref{section:proof}.
These results are Lemmas~\ref{lemma:1}, \ref{lemma:2} and \ref{lemma:3} below.

Let $S$ be a smooth rational surface that satisfy \eqref{equation:condition},
let $C_1,\ldots,C_n$ be irreducible curves on the surface $S$,
let $U=S\setminus(C_1\cup\cdots\cup C_n)$, and let $D=\sum_{i=1}^n\lambda_iC_i$
for some non-negative rational numbers $\lambda_1,\ldots,\lambda_n$.
Suppose also that $S$ contains smooth rational curves $E_1,\ldots,E_{r}$ such that $E_i^2=-1$ for every~$i$,
and
$$
D\sim_{\mathbb{Q}}-K_S+\sum_{i=1}^{r}a_iE_i
$$
for some non-negative rational numbers $a_1,\ldots,a_r$.

\begin{remark}
\label{remark:ample-not-ample}
If  $D$ is ample, then $r$ is the Fujita rank of the divisor $D$.
However, in this section, we deliberately do not assume that $D$ is ample.
We hope that this will not make big confusion.
We must consider non-ample divisors here, because
Lemmas~\ref{lemma:1}, \ref{lemma:2} and \ref{lemma:3} can be applied also to non-ample divisors,
which is also used in their proofs.
\end{remark}

Let $g\colon S\to\overline{S}$ be a blow down of the curves $E_1,\ldots,E_{r}$,
let $\overline{C}_1=g(C_1),\ldots,\overline{C}_n=g(C_n)$, and let $\overline{D}=\lambda_1\overline{C}_1+\cdots+\lambda_n\overline{C}_n$.
Then $K_{\overline{S}}^2=r+K_S^2$ and $\overline{D}\sim_{\mathbb{Q}} -K_{\overline{S}}$.

\begin{remark}
\label{remark:del-Pezzo}
Since $S$ satisfies \eqref{equation:condition} by assumption, the surface $\overline{S}$ also satisfies \eqref{equation:condition}.
In particular, if $r+K_S^2\geqslant 1$, then $\overline{S}$ is a smooth del Pezzo surface by Remark~\ref{remark:generalization}.
\end{remark}

First, let us prove one auxiliary result:

\begin{lemma}
\label{lemma:easy}
Suppose that $C_i\ne E_j$ for all $i$ and $j$. Then the log pair $(S,D)$ is log canonical along $E_1\cup\cdots\cup E_{r}$.
\end{lemma}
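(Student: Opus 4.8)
The plan is to argue by contradiction, localizing at a single point. Suppose $(S,D)$ fails to be log canonical at some point $P\in G_1\cup\cdots\cup G_r$. Since the curves $G_1,\ldots,G_r$ are pairwise disjoint, $P$ lies on exactly one of them; after relabelling, say $P\in G:=G_1$, with associated coefficient $\epsilon:=\epsilon_1$. The whole idea is that a $(-1)$-curve meets $-K_S$ very little, so $D$ cannot concentrate enough multiplicity along $G$ to produce a non-log-canonical singularity there.

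First I would record the relevant intersection numbers. Because $G$ is a smooth rational curve with $G^2=-1$, adjunction gives $-K_S\cdot G=1$; since the $G_i$ are pairwise disjoint and $G^2=-1$, we get $\big(\sum_i\epsilon_iG_i\big)\cdot G=-\epsilon$. As $G=G_1\neq C_i$ for every $i$, the curve $G$ is not a component of $D=\sum_i\lambda_iC_i$, so $D\cdot G\geqslant 0$ and, more to the point, every local intersection number of $D$ and $G$ is non-negative. Combining these,
\[
0\leqslant D\cdot G=\Big(-K_S+\sum_{i=1}^r\epsilon_iG_i\Big)\cdot G=1-\epsilon\leqslant 1,
\]
where the last inequality uses $\epsilon\geqslant 0$. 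In particular $\big(D\cdot G\big)_P\leqslant D\cdot G\leqslant 1$.

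The decisive step is inversion of adjunction along $G$. Since $(S,D)$ is not log canonical at $P$ and $G$ is effective, the pair $(S,D+G)$ is \emph{a fortiori} not log canonical at $P$; here $G$ enters with coefficient $1$, is smooth at $P$, and is not contained in $\mathrm{Supp}(D)$. Applying \cite[Theorem~7]{Cheltsov} to $(S,G+D)$ then yields $\big(D\cdot G\big)_P>1$, contradicting the bound $\big(D\cdot G\big)_P\leqslant 1$ obtained above. This contradiction shows that $(S,D)$ is log canonical at every point of $G_1\cup\cdots\cup G_r$, which is the assertion of the lemma.

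I do not expect a genuine obstacle here: the statement is essentially a one-line consequence of the computation $D\cdot G=1-\epsilon$ together with inversion of adjunction, and note that the generality hypothesis ($\bigstar$) is not even needed, since the $G_i$ are assumed to be $(-1)$-curves outright. The only points requiring a little care are the bookkeeping that $P$ lies on a unique $G_j$ (using disjointness, so that the relevant intersection computation involves only one coefficient $\epsilon$) and the legitimacy of enlarging $D$ to $D+G$ before invoking adjunction, which is harmless because adding an effective divisor can only worsen log canonicity.
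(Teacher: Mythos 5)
Your proof is correct, and its skeleton is exactly that of the paper: localize at a point $P$ lying on a single $G_j$ (by disjointness), use the hypothesis $C_i\ne G_j$ to see that $G:=G_1$ is not a component of $D$, compute $D\cdot G=\bigl(-K_S+\sum_{i=1}^r\epsilon_iG_i\bigr)\cdot G=1-\epsilon_1\leqslant 1$, and derive a contradiction from the failure of log canonicity. The only divergence is the closing step, where the paper is cheaper than you are: non-log-canonicity of $(S,D)$ at the point $P$ of a smooth surface already forces $\mathrm{mult}_P(D)>1$, and since $G$ is smooth and not contained in $\mathrm{Supp}(D)$, one gets $D\cdot G\geqslant \bigl(D\cdot G\bigr)_P\geqslant \mathrm{mult}_P(D)\cdot\mathrm{mult}_P(G)=\mathrm{mult}_P(D)>1$, with no inversion of adjunction needed at all. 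Your detour --- enlarging $D$ to $D+G$ (legitimate, since adding an effective divisor only decreases discrepancies) and then applying \cite[Theorem~7]{Cheltsov} to conclude $\bigl(D\cdot G\bigr)_P>1$ --- is a valid application of that theorem ($G$ appears with coefficient $1$, is smooth, and is not a component of $D$), and it even yields the marginally stronger local statement $\bigl(D\cdot G\bigr)_P>1$; but that extra strength buys nothing here, and the paper reserves the inversion-of-adjunction machinery for Lemma~\ref{lemma:nine-points}, where it is genuinely needed. You are also right that the generality condition ($\bigstar$) plays no role: the lemma's standing hypotheses (the $G_j$ are disjoint $(-1)$-curves and $C_i\ne G_j$) suffice, exactly as in the paper's proof.
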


\begin{proof}
Suppose that the log pair $(S,D)$ is not log canonical at some point $P\in E_1\cup\cdots\cup E_{r}$.
Then $\mathrm{mult}_{P}(D)>1$ by Lemma~\ref{lemma:mult-1}.
Thus, if $P\in E_1$, then $1\geqslant 1-a_1=D\cdot E_1>1$,
which is absurd. Similarly, we see that $P\not\in E_2\cup\cdots\cup E_{r}$.
\end{proof}

Recall that $U=S\setminus(C_1\cup\cdots\cup C_n)$, and $r$ is the number of $g$-exceptional curves.

\begin{lemma}
\label{lemma:1}
Suppose that $r+K_S^2=1$, and $\lambda_i>0$ for every $i$.
Then $U$ is not a cylinder.
\end{lemma}

\begin{proof}
We have $U=S\setminus\mathrm{Supp}(D)$, and $\overline{S}$ is a smooth del Pezzo surface by Remark~\ref{remark:del-Pezzo}.
If $K_S^2=1$, then $r=0$, so that $S\cong\overline{S}$ and $D\sim_{\mathbb{Q}} -K_{S}$.
In this case, if $U$ is a cylinder, then $U$ is a $(-K_S)$-polar cylinder,
which is impossible by Theorem~\ref{theorem:del-Pezzo}.
Therefore, we may assume that $K_S^2\leqslant 0$.
Let us prove the required assertion by induction on $K_S^2$.

Suppose first that $C_1=E_1$. Then there exists a commutative diagram
$$
\xymatrix{
&S\ar[dl]_{f}\ar[dr]^{g}& \\
\widehat{S}\ar[rr]^{h}&&\overline{S}}
$$
where $f\colon S\to\widehat{S}$ is a contraction of the curve $C_1=E_1$, and $h$ is a birational morphism.
Denote by $\widehat{E}_2,\ldots,\widehat{E}_{r}$ the proper transforms on $\widehat{S}$ of the curves $E_2,\ldots,E_{r}$, respectively.
Denote by $\widehat{C}_2,\ldots,\widehat{C}_n$ the proper transforms on $\widehat{S}$ of the curves $C_2,\ldots,C_n$, respectively.
Then $K_{\widehat{S}}^2=K_S^2+1$ and
$$
-K_{\widehat{S}}+\sum_{i=2}^{r}a_i\widehat{E}_i\sim_{\mathbb{Q}}\sum_{i=2}^n\lambda_i\widehat{C}_i.
$$
By induction, the subset $\widehat{S}\setminus\big(\widehat{C}_2\cup\cdots\cup\widehat{C}_n\big)\cong U$
is not a cylinder.
Thus, we may assume that~$C_1\ne E_1$.
Similarly, we may assume that $C_i\ne E_j$ for all possible $i$ and $j$,
which means that none of the curves $E_1,\ldots,E_{r}$ are contained in $\mathrm{Supp}(D)$.

Suppose that $U$ is a cylinder.
Then $n\geqslant 10-K_S^2\geqslant 10$ by Lemma~\ref{lemma:main-1},
and
$$
1=-K_{\overline{S}}\cdot\overline{D}=-K_{\overline{S}}\cdot\Big(\lambda_1\overline{C}_1+\cdots+\lambda_n\overline{C}_n\Big)\geqslant\sum_{i=1}^{n}\lambda_i,
$$
because the divisor $-K_{\overline{S}}$ is ample.
Thus, we see that $\lambda_i<1$ for~every~$i$.

By Lemma~\ref{lemma:main-2}, the surface $S$ contains a point $P$ such that the log pair $(S,D)$ is not log canonical at~$P$.
In the notations of Section~\ref{section:preliminaries}, the point $P$ is the point $\pi(\mathcal{C})$.
Let $\overline{P}=g(P)$. Then $(\overline{S},\overline{D})$ is not log canonical at $\overline{P}$,
because $P\not\in E_1\cup\cdots\cup E_{r}$ by Lemma~\ref{lemma:easy}.

By Theorem~\ref{theorem:del-Pezzo-tigers}, there is~a unique curve $\overline{T}\in |-K_{\overline{S}}|$ such that $(\overline{S},\overline{T})$ is not log canonical at the point~$\overline{P}$.
Note that $\overline{T}$ is irreducible.
Thus, Theorem~\ref{theorem:del-Pezzo-tigers} also implies that $\overline{T}$ is one of the curves $\overline{C}_1,\ldots,\overline{C}_n$.
Without loss of generality, we may assume that $\overline{T}=\overline{C}_1$.

The curve $\overline{T}=\overline{C}_1$ is singular at $\overline{P}$.
In fact, we can say more: this curve has a cuspidal singularity at $\overline{P}$, and it is smooth away from this point.
For every $i\in\{1,\ldots,r\}$, we let
$$
m_i=\left\{\aligned
&0\ \text{if}\ g(E_i)\not\in\overline{T},\\
&1\ \text{if}\ g(E_i)\in\overline{T}.\\
\endaligned
\right.
$$
Then
$$
C_1\sim g^{*}\big(\overline{C}_1\big)-\sum_{i=1}^{r}m_iE_i\sim -K_{S}+\sum_{i=1}^{r}(1-m_i)E_i.
$$

Let us replace $D$ by a divisor $(1+\mu)D-\mu C_1$ for an appropriate rational number $\mu>0$
such that the new divisor is effective and its support does not contain the curve $C_1$.
Let
$$
D^\prime=\frac{1}{1-\lambda_1}D-\frac{\lambda_1}{1-\lambda_1}C_1=\sum_{i=2}^n\frac{\lambda_i}{1-\lambda_1}C_i.
$$
Then $D^\prime$ is an effective divisor whose support does not contain the curve $C_1$.
On the other hand, we have
$$
D^\prime\sim_{\mathbb{Q}}-K_{S}+\sum_{i=1}^{r}\frac{a_i+(m_i-1)\lambda_1}{1-\lambda_1}E_i.
$$
Thus, if $\frac{a_i+(m_i-1)\lambda_1}{1-\lambda_1}\geqslant 0$ for every $i$,
then $(S,D^\prime)$ is not log canonical at $P$ by Lemma~\ref{lemma:main-2}.
In this case, the singularities of the log pair
$$
\Bigg(\overline{S},\sum_{i=2}^n\frac{\lambda_i}{1-\lambda_1}\overline{C}_i\Bigg)
$$
are not log canonical at the point $\overline{P}$, because $P\not\in E_1\cup\cdots\cup E_{r}$.
The latter is impossible by Theorem~\ref{theorem:del-Pezzo-tigers}.
Therefore, at least one rational number among
$$
\frac{a_1+(m_1-1)\lambda_1}{1-\lambda_1}, \frac{a_2+(m_2-1)\lambda_1}{1-\lambda_1}, \ldots, \frac{a_{r}+(m_{r}-1)\lambda_1}{1-\lambda_1}
$$
must be negative.

Without loss of generality, we may assume that there exists $k\leqslant r$ such that
$$
\frac{a_i+(m_i-1)\lambda_1}{1-\lambda_1}<0
$$
for every $i\leqslant k$, and $\frac{a_i+(m_i-1)\lambda_1}{1-\lambda_1}\geqslant 0$ for every $i>k$ (if any).
Then~$m_1=\cdots=m_k=0$. We may also assume that $a_1\leqslant\cdots\leqslant a_k$. Let
$$
D^{\prime\prime}=\frac{1}{1-a_1}D-\frac{a_1}{1-a_1}C_1=\frac{\lambda_1-a_1}{1-a_1}C_1+\sum_{i=2}^n\frac{\lambda_i}{1-a_1}C_i.
$$
Then $D^{\prime\prime}$ is an effective $\mathbb{Q}$-divisor such that
$$
D^{\prime\prime}\sim_{\mathbb{Q}}-K_{S}+\sum_{i=2}^{r}\frac{a_i-a_1(1-m_i)}{1-a_1}E_i=-K_S+\sum_{i=2}^{k}\frac{a_i-a_1}{1-a_1}E_i+\sum_{i=k+1}^{r}\frac{a_i-a_1(1-m_i)}{1-a_1}E_i.
$$
Note that $\frac{a_i-a_1(1-m_i)}{1-a_1}\geqslant 0$ for every possible $i>k$, because $a_1<\lambda_1$.

Let $e\colon\widetilde{S}\to\overline{S}$ be the blow up of the point $g(E_1)$, and let $\widetilde{E}_1$ be its exceptional curve.
Denote by $\widetilde{C}_1,\ldots,\widetilde{C}_n$ the proper transforms on $\widetilde{S}$ of the curves $C_1,\ldots,C_n$, respectively.
Likewise, denote by $\widetilde{D}^{\prime\prime}$ the proper transform of the divisor  $D^{\prime\prime}$ on the surface $\widetilde{S}$. Then
$$
\widetilde{D}^{\prime\prime}=\frac{\lambda_1-a_1}{1-a_1}\widetilde{C}_1+\sum_{i=2}^n\frac{\lambda_i}{1-a_1}\widetilde{C}_i\sim_{\mathbb{Q}} -K_{\widetilde{S}}.
$$

Since $g(E_1)\not\in\overline{T}$, the point $g(E_1)$ is not the base point of the pencil $|-K_{\overline{S}}|$.
Thus, the pencil $|-K_{\overline{S}}|$ contains a unique irreducible curve that passes through $g(E_1)$.
Denote this curve by $\overline{R}$, and denote by $\widetilde{R}$ and $R$ be the proper transforms of this curve on the surfaces~$\widetilde{S}$ and $S$, respectively.
If $\overline{R}$ is singular at  $g(E_1)$, then $R$ is a smooth rational curve such that
$R^2\leqslant\widetilde{R}^2=-3$,
which is impossible, because $S$ satisfies \eqref{equation:condition}.
Thus, we see that the curve $R$ is smooth at the point $g(E_1)$.
Then $\widetilde{R}\sim -K_{\widetilde{S}}$ and $\widetilde{R}^2=0$.
In particular, the curve $\widetilde{R}$ is a nef divisor.
On the other hand, we have $\widetilde{C}_1\cdot\widetilde{R}=1$,
because $\overline{C}_1=\overline{T}$ does not contain the point $g(E_1)$, since $m_1=0$.
Then
$$
0=K_{\widetilde{S}}^2=\widetilde{D}^{\prime\prime}\cdot\widetilde{R}=\frac{\lambda_1-a_1}{1-a_1}\widetilde{C}_1\cdot\widetilde{R}+\sum_{i=2}^n\frac{\lambda_i}{1-a_1}\widetilde{C}_i\cdot\widetilde{R}\geqslant \frac{\lambda_1-a_1}{1-a_1}\widetilde{C}_1\cdot\widetilde{R}=\frac{\lambda_1-a_1}{1-a_1},
$$
so that $a_1\geqslant\lambda_1$. This is a contradiction, since we already proved that $a_1<\lambda_1$.
\end{proof}

\begin{lemma}
\label{lemma:2}
Suppose that  $r+K_S^2=2$, and $\lambda_i>0$ for every $i$.
Then $U$ is not a cylinder.
\end{lemma}

\begin{proof}
We have $K_{\overline{S}}^2=2$, so that $\overline{S}$ is a smooth del Pezzo surface by Remark~\ref{remark:del-Pezzo}.
If $K_S^2=2$, then $r=0$ and $S\cong\overline{S}$.
In this case, the required assertion follows from Theorem~\ref{theorem:del-Pezzo}.
Thus, we may assume that $K_S^2\leqslant 1$.
Moreover, arguing as in the proof of Lemma~\ref{lemma:1}, we may assume that $C_i\ne E_j$ for all $i$~and~$j$.
Then, applying \cite[Lemma~3.1]{CheltsovParkWon1} to the log pair $(\overline{S},\overline{D})$,
we conclude that $\lambda_i\leqslant 1$ for each~$i$.

Suppose that $U=S\setminus\mathrm{Supp}(D)$ is a cylinder. Then $n\geqslant 9$ by Lemma~\ref{lemma:main-1}.
Moreover, by Lemma~\ref{lemma:main-2}, the surface $S$ contains a point $P$ such that the log pair $(S,D)$ is not log canonical at~$P$.
In the notations of Section~\ref{section:preliminaries}, the point $P$ is the point $\pi(\mathcal{C})$.
Let $\overline{P}=g(P)$. Then $(\overline{S},\overline{D})$ is not log canonical at $\overline{P}$,
because $P\not\in E_1\cup\cdots\cup E_{r}$ by Lemma~\ref{lemma:easy}.

By Theorem~\ref{theorem:del-Pezzo-tigers}, the linear system $|-K_{\overline{S}}|$ contains a curve $\overline{T}$
such that $(\overline{S},\overline{T})$ is not log canonical at the point $\overline{P}$,
and  irreducible components of the curve $\overline{T}$ are among the curves $\overline{C}_1,\ldots,\overline{C}_n$.
In particular, the curve $\overline{T}$ is singular at $\overline{P}$.
Note that this property uniquely determines the curve $\overline{T}$.
Moreover, since $\overline{S}$ is a smooth del Pezzo surface of degree $K_{\overline{S}}^2=2$,
the curve $\overline{T}$ has at most two irreducible component.
Thus, without loss of generality, we may assume that either $\overline{T}=\overline{C}_1$,
or $\overline{T}=\overline{C}_1+\overline{C}_2$ and $\lambda_1\leqslant\lambda_2$.

If $\overline{T}=\overline{C}_1$, then $\overline{T}$ has a cuspidal singularity at the point $\overline{P}$.
Likewise, if $\overline{T}=\overline{C}_1+\overline{C}_2$, then $\overline{T}$ has a tacknodal singularity at the point $\overline{P}$.
In both cases, the point $\overline{P}$ is the unique singular point of the curve $\overline{T}$.
As in the proof of Lemma~\ref{lemma:1}, for every $i\in\{1,\ldots,r\}$, let
$$
m_i=\left\{\aligned
&0\ \text{if}\ g(E_i)\not\in\overline{T},\\
&1\ \text{if}\ g(E_i)\in\overline{T}.\\
\endaligned
\right.
$$
Let $T$ be the proper transform of the curve $\overline{T}$ on the surface $S$. Then
$$
T\sim g^{*}\big(\overline{T}\big)-\sum_{i=1}^{r}m_iE_i\sim -K_{S}+\sum_{i=1}^{r}(1-m_i)E_i.
$$

If $\overline{T}=\overline{C}_1$, then $\lambda_1<1$, because
$$
2=-K_{\overline{S}}\cdot\overline{D}=\sum_{i=1}^{n}\lambda_i\Big(-K_{\overline{S}}\cdot\overline{C}_i\Big)=2\lambda_1+\sum_{i=2}^{n}\lambda_i\Big(-K_{\overline{S}}\cdot\overline{C}_i\Big)\geqslant 2\lambda_1+\sum_{i=2}^n\lambda_i>2\lambda_1.
$$
Similarly, if $\overline{T}=\overline{C}_1+\overline{C}_2$, then $\lambda_1<1$, because
$$
2=-K_{\overline{S}}\cdot\overline{D}=\lambda_1+\lambda_2+\sum_{i=3}^{n}\lambda_i\Big(-K_{\overline{S}}\cdot\overline{C}_i\Big)>\lambda_1+\lambda_2\geqslant 2\lambda_1.
$$

Let $D^\prime=\frac{1}{1-\lambda_1}D-\frac{\lambda_1}{1-\lambda_1}T$ and $\overline{D}^\prime=\frac{1}{1-\lambda_1}\overline{D}-\frac{\lambda_1}{1-\lambda_1}\overline{T}$.
If $\overline{T}=\overline{C}_1$, then
$$
D^\prime=\sum_{i=2}^n\frac{\lambda_i}{1-\lambda_1}C_i.
$$
Similarly, if $\overline{T}=\overline{C}_1+\overline{C}_2$, then
$$
D^\prime=\frac{\lambda_2-\lambda_1}{1-\lambda_1}C_2+\sum_{i=3}^n\frac{\lambda_i}{1-\lambda_1}C_i.
$$
In both cases, the divisor $D^\prime$ is effective, and its support does not contain the curve $C_1$.
On the other hand, we have
$$
D^\prime\sim_{\mathbb{Q}}-K_{S}+\sum_{i=1}^{r}\frac{a_i+(m_i-1)\lambda_1}{1-\lambda_1}E_i.
$$
Thus, if $\frac{a_i+(m_i-1)\lambda_1}{1-\lambda_1}\geqslant 0$ for every $i$,
then the log pair $(S,D^\prime)$ is not log canonical at the point $P$ by Lemma~\ref{lemma:main-2}.
Then $\overline{D}^\prime\sim_{\mathbb{Q}}-K_{\overline{S}}$ and $(\overline{S}, \overline{D}^\prime)$ is not log canonical at $\overline{P}$, which contradicts Theorem~\ref{theorem:del-Pezzo-tigers}.
Hence, at least one of the numbers $\frac{a_1+(m_1-1)\lambda_1}{1-\lambda_1},\ldots,\frac{a_r+(m_r-1)\lambda_1}{1-\lambda_1}$ must be negative.
Without loss of generality, we may assume that
$$
\frac{a_i+(m_i-1)\lambda_1}{1-\lambda_1}<0\iff i\leqslant k
$$
for some $k\leqslant r$, and $a_1\leqslant\cdots\leqslant a_k$.
Then $m_i=0$ and $a_i<\lambda_1$ for every $i\leqslant k$.

Let $D^{\prime\prime}=\frac{1}{1-a_1}D-\frac{a_1}{1-a_1}T$.
Then $D^{\prime\prime}$ is effective. Indeed, if $\overline{T}=\overline{C}_1$, then
$$
D^{\prime\prime}=\frac{\lambda_1-a_1}{1-a_1}C_1+\sum_{i=2}^n\frac{\lambda_i}{1-a_1}C_i.
$$
Similarly, if $\overline{T}=\overline{C}_1+\overline{C}_2$, then
$$
D^{\prime\prime}=\frac{\lambda_1-a_1}{1-a_1}C_1+\frac{\lambda_2-a_1}{1-a_1}C_2+\sum_{i=3}^n\frac{\lambda_i}{1-a_1}C_i.
$$
Note that $\mathrm{Supp}(D^{\prime\prime})=\mathrm{Supp}(D)$. On the other hand, we have
$$
D^{\prime\prime}\sim_{\mathbb{Q}}-K_{S}+\sum_{i=2}^{r}\frac{a_i-a_1(1-m_i)}{1-a_1}E_i.
$$
Applying Lemma~\ref{lemma:1} to $D^{\prime\prime}$, we see that $U$ is not a cylinder. This is a contradiction.
\end{proof}

\begin{lemma}
\label{lemma:3}
Suppose that  $r+K_S^2=3$, and $\lambda_i>0$ for every~$i$.
Then $U$ is not a cylinder.
\end{lemma}

\begin{proof}
Since $K_{\overline{S}}^2=3$, we see that $\overline{S}$ is a smooth cubic surface in $\mathbb{P}^3$ by Remark~\ref{remark:del-Pezzo}.
Thus, if $K_S^2=3$, then $r=0$ and $S\cong\overline{S}$ and $D\sim_{\mathbb{Q}}-K_{S}$,
so that $U=S\setminus\mathrm{Supp}(D)$ is not a cylinder by Theorem~\ref{theorem:del-Pezzo}.
Therefore, we may assume that $K_S^2\leqslant 2$.
Moreover, arguing as in the proof of Lemma~\ref{lemma:1}, we may assume that $C_i\ne E_j$ for all possible $i$~and~$j$.
Then, applying \cite[Lemma~4.1]{CheltsovParkWon1} to the log pair $(\overline{S},\overline{D})$, we conclude that $\lambda_i\leqslant 1$ for each~$i$.

Suppose that $U$ is a cylinder. Let us seek for a contradiction.
By Lemma~\ref{lemma:main-1}, we have $n\geqslant 8$ .
By Lemma~\ref{lemma:main-2}, the surface~$S$ contains a point $P$ such that $(S,D)$ is not log canonical at~$P$.
In the notations of Section~\ref{section:preliminaries}, the point $P$ is the point $\pi(\mathcal{C})$.
Let $\overline{P}=g(P)$. Then $(\overline{S},\overline{D})$ is not log canonical at $\overline{P}$,
because $P\not\in E_1\cup\cdots\cup E_{r}$ by Lemma~\ref{lemma:easy}.

Let $\overline{T}$ be the hyperplane section of the cubic surface $\overline{S}$ that is singular at $\overline{P}$.
By Theorem~\ref{theorem:del-Pezzo-tigers}, the pair $(\overline{S},\overline{T})$ is not log canonical~at~$\overline{P}$,
and all irreducible components of the curve $\overline{T}$ are among the irreducible curves $\overline{C}_1,\ldots,\overline{C}_n$.
Thus, we may assume that
either $\overline{T}=\overline{C}_1$,
or $\overline{T}=\overline{C}_1+\overline{C}_2$ and $\lambda_1\leqslant\lambda_2$,
or $\overline{T}=\overline{C}_1+\overline{C}_2+\overline{C}_3$ and $\lambda_1\leqslant\lambda_2\leqslant\lambda_3$.

If $\overline{T}=\overline{C}_1$, then $\overline{T}$ has a cuspidal singularity at the point $\overline{P}$.
Likewise, if $\overline{T}=\overline{C}_1+\overline{C}_2$, then $\overline{T}$ has a tacknodal singularity at the point $\overline{P}$.
Finally, if $\overline{T}=\overline{C}_1+\overline{C}_2+\overline{C}_3$, then the curves $\overline{C}_1$, $\overline{C}_2$ and $\overline{C}_3$ are lines that pass through the point $\overline{P}$.
Therefore, in all possible cases, the point $\overline{P}$ is the unique singular point of the curve $\overline{T}$.
As in the proofs of Lemmas~\ref{lemma:1} and \ref{lemma:1}, for every $i\in\{1,\ldots,r\}$, we let
$$
m_i=\left\{\aligned
&0\ \text{if}\ g(E_i)\not\in\overline{T},\\
&1\ \text{if}\ g(E_i)\in\overline{T}.\\
\endaligned
\right.
$$
Let $T$ be the proper transform of the curve $\overline{T}$ on the surface $S$. Then
$$
T\sim g^{*}\big(\overline{T}\big)-\sum_{i=1}^rm_iE_i\sim -K_{S}+\sum_{i=1}^r(1-m_i)E_i.
$$

We claim that $\lambda_1<1$. Indeed, if $\overline{T}=\overline{C}_1$, then
$$
3=-K_{\overline{S}}\cdot\overline{D}=\sum_{i=1}^{n}\lambda_i\Big(-K_{\overline{S}}\cdot\overline{C}_i\Big)=3\lambda_1+\sum_{i=2}^{n}\lambda_i\Big(-K_{\overline{S}}\cdot\overline{C}_i\Big)\geqslant 3\lambda_1+\sum_{i=2}^n\lambda_i>3\lambda_1,
$$
so that $\lambda_1<1$. Similarly, if $\overline{T}=\overline{C}_1+\overline{C}_2$, then $\lambda_1<1$, because
$$
3=\lambda_1\mathrm{deg}\big(\overline{C}_1\big)+\lambda_2\mathrm{deg}\big(\overline{C}_2\big)+\sum_{i=3}^{n}\lambda_i\Big(-K_{\overline{S}}\cdot\overline{C}_i\Big)>\lambda_1\Big(\mathrm{deg}\big(\overline{C}_1\big)+\mathrm{deg}\big(\overline{C}_2\big)\Big)=3\lambda_1.
$$
Finally, if $\overline{T}=\overline{C}_1+\overline{C}_2+\overline{C}_3$, then we also have $\lambda_1<1$, because
$$
3=-K_{\overline{S}}\cdot\overline{D}=\lambda_1+\lambda_2+\lambda_3+\sum_{i=4}^{n}\lambda_i\Big(-K_{\overline{S}}\cdot\overline{C}_i\Big)>\lambda_1+\lambda_2+\lambda_3\geqslant 3\lambda_1.
$$

Let $D^\prime=\frac{1}{1-\lambda_1}D-\frac{\lambda_1}{1-\lambda_1}T$ and $\overline{D}^\prime=\frac{1}{1-\lambda_1}\overline{D}-\frac{\lambda_1}{1-\lambda_1}\overline{T}$.
If $\overline{T}=\overline{C}_1$, then
$$
D^\prime=\sum_{i=2}^n\frac{\lambda_i}{1-\lambda_1}C_i.
$$
Similarly, if $\overline{T}=\overline{C}_1+\overline{C}_2$, then
$$
D^\prime=\frac{\lambda_2-\lambda_1}{1-\lambda_1}C_2+\sum_{i=3}^n\frac{\lambda_i}{1-\lambda_1}C_i.
$$
Finally, if $\overline{T}=\overline{C}_1+\overline{C}_2+\overline{C}_3$, then
$$
D^\prime=\frac{\lambda_2-\lambda_1}{1-\lambda_1}C_2+\frac{\lambda_3-\lambda_1}{1-\lambda_1}C_3+\sum_{i=4}^n\frac{\lambda_i}{1-\lambda_1}C_i.
$$
Therefore, in all cases, the divisor $D^\prime$ is effective, and its support does not contain the curve $C_1$.
On the other hand, we have
$$
D^\prime\sim_{\mathbb{Q}}-K_{S}+\sum_{i=1}^{r}\frac{a_i+(m_i-1)\lambda_1}{1-\lambda_1}E_i.
$$
Thus, if $\frac{a_i+(m_i-1)\lambda_1}{1-\lambda_1}\geqslant 0$ for every $i$,
then $(S,D^\prime)$ is not log canonical at $P$ by Lemma~\ref{lemma:main-2},
so that the log pair $(\overline{S}, \overline{D}^\prime)$ is not log canonical at $\overline{P}$,
which contradicts Theorem~\ref{theorem:del-Pezzo-tigers}, because $\overline{D}^\prime\sim_{\mathbb{Q}}-K_{\overline{S}}$
and the support of the divisor $\overline{D}^\prime$ does not contain the curve $\overline{C}_1$.
Hence, at least one number among $\frac{a_1+(m_1-1)\lambda_1}{1-\lambda_1},\ldots,\frac{a_r+(m_r-1)\lambda_1}{1-\lambda_1}$ is negative.

Without loss of generality, we may assume that
$$
\frac{a_i+(m_i-1)\lambda_1}{1-\lambda_1}<0\iff i\leqslant k
$$
for some $k\leqslant r$, and $a_1\leqslant\cdots\leqslant a_k$.
Then $m_i=0$ and $a_i<\lambda_1$ for every $i=1,\ldots,k$.

Put $D^{\prime\prime}=\frac{1}{1-a_1}D-\frac{a_1}{1-a_1}T$.
Then $D^{\prime\prime}$ is an effective divisor. Indeed, if $\overline{T}=\overline{C}_1$, then
$$
D^{\prime\prime}=\frac{\lambda_1-a_1}{1-a_1}C_1+\sum_{i=2}^n\frac{\lambda_i}{1-a_1}C_i.
$$
Similarly, if $\overline{T}=\overline{C}_1+\overline{C}_2$, then
$$
D^{\prime\prime}=\frac{\lambda_1-a_1}{1-a_1}C_1+\frac{\lambda_2-a_1}{1-a_1}C_2+\sum_{i=3}^n\frac{\lambda_i}{1-a_1}C_i.
$$
Finally, if $\overline{T}=\overline{C}_1+\overline{C}_2+\overline{C}_3$, then
$$
D^{\prime\prime}=\frac{\lambda_1-a_1}{1-a_1}C_1+\frac{\lambda_2-a_1}{1-a_1}C_2+\frac{\lambda_3-a_1}{1-a_1}C_3+\sum_{i=4}^n\frac{a_i}{1-a_1}C_i.
$$
In all cases $\mathrm{Supp}(D^{\prime\prime})=\mathrm{Supp}(D)$. On the other hand, we have
$$
D^{\prime\prime}\sim_{\mathbb{Q}}-K_{S}+\sum_{i=2}^{r}\frac{a_i-a_1(1-m_i)}{1-a_1}E_i.
$$
Applying Lemma~\ref{lemma:2} to $D^{\prime\prime}$, we see that $U$ is not a cylinder. This is a contradiction.
\end{proof}

\section{The proof}
\label{section:proof}

In this section, we prove Theorem~\ref{theorem:main} using Lemmas~\ref{lemma:1}, \ref{lemma:2} and \ref{lemma:3}.

Let $S$ be a smooth rational surface,
let $A$ be an ample $\mathbb{Q}$-divisor on $S$, and let $\mu_A$ be its Fujita invariant.
Then $K_S+\mu_AA$ is contained in the boundary of the Mori cone $\overline{\mathbb{NE}(S)}$.
Thus, the divisor $K_S+\mu_AA$ is pseudo-effective, and it is not big.
Let $\Delta_{A}$ be the smallest extremal face of the cone $\overline{\mathbb{NE}(S)}$ that contains $K_{S}+\mu_A A$,
and let $r_A$ be the dimension of this face, i.e. $r_A$ is the Fujita~rank of the divisor $A$.
To prove Theorem~\ref{theorem:main}, we have to show that $S$ does not contain $A$-polar cylinders
if $S$ satisfies \eqref{equation:condition}, and $r_A+K_S^2\leqslant 3$.

First, let us describe the Zariski decomposition of the divisor $K_{S}+\mu_A A$, 
which follows from  \cite[Theorem~1]{Sakai} or \cite{Prokhorov}.
To be precise, we have

\begin{lemma}
\label{lemma:Sakai}
There is a birational morphism $g\colon S\to\overline{S}$ such that $\overline{S}$ is smooth, and
$$
K_{S}+\mu_A A\sim_{\mathbb{Q}}g^{*}\Big(K_{\overline{S}}+\mu_A\overline{A}\Big)+\sum_{i=1}^{r}a_iE_i,
$$
where $E_1,\ldots,E_{r}$ are all $g$-exceptional curves, $a_1,\ldots,a_{r}$ are positive rational numbers, $\overline{A}=g_{*}(A)$,
the divisor $K_{\overline{S}}+\mu_A\overline{A}$ is nef, and $(K_{\overline{S}}+\mu_A\overline{A})^2=0$.
Moreover, one of the following two cases holds:
\begin{enumerate}
\item either $\overline{S}$ is a smooth del Pezzo surface, $K_{\overline{S}}+\mu_A\overline{A}\sim_{\mathbb{Q}} 0$, and $r=r_A$,

\item or there exists a conic bundle $h\colon\overline{S}\to\mathbb{P}^1$ such that
$K_{\overline{S}}+\mu_A\overline{A}\sim_{\mathbb{Q}} qF$ for a positive rational number $q$,
where $F$ is a fiber of $h$, and $r_A=\mathrm{rk}\,\mathrm{Pic}(S)-1$.
\end{enumerate}
\end{lemma}

\begin{proof}
The surface $S$ contains an irreducible curve $C$ such that
$\mu_AA\sim_{\mathbb{Q}}aC$ for some positive rational number $a$, and the singularities of the log pair $(S,aC)$ are log terminal.
Thus, we can apply Log Minimal Model Program to this log pair (see \cite{KollarMori98}).

If $K_{S}+aC\sim_{\mathbb{Q}} 0$, the required assertion is obvious.
Likewise, if $K_{S}+aC\not\sim_{\mathbb{Q}} 0$ and the divisor $K_{S}+aC$ is nef,
then $(K_{S}+aC)^2=0$, because $K_{S}+aC$ is not big by assumption.
In this case, the required assertion follows from \cite[Theorem~3.3]{KollarMori98}, because $C$ is ample.
Thus, we may assume that $K_{S}+aC$ is not nef.

If $\mathrm{rk}\,\mathrm{Pic}(S)=1$, then $S=\mathbb{P}^2$.
If $\mathrm{rk}\,\mathrm{Pic}(S)=2$, then $S$ is one of Hirzebruch surfaces.
In both cases, the required assertion is obvious.
Thus, we may assume that $\mathrm{rk}\,\mathrm{Pic}(S)\geqslant 3$.
Then, since $K_{S}+aC$ is not nef, there exists a birational map $g_1\colon S\to S_1$  that
contracts an irreducible curve $E_1$ such that $(K_{S}+aC)\cdot E_1<0$.
Since $C$ is ample, we see that $E_1\ne C$ and $K_S\cdot E_1<0$, which implies that $E_1$ is a smooth rational curve, and $E_1^2=-1$.
In particular, the surface $S_1$ is smooth.

Let $C_1=g(C)$. Then
$$
K_{S}+aC\sim_{\mathbb{Q}}g_1^{*}\big(K_{S_1}+aC_1\big)+b_1E_i
$$
for some rational number $b_1>0$.
Then $(S_1,aC_1)$ is log terminal, the divisor $aC_1$ is ample,
and the divisor $K_{S_1}+aC_1$ is contained in the boundary of the Mori cone $\overline{\mathbb{NE}(S_1)}$.
Hence, we can apply the same arguments to $K_{S_1}+aC_1$ and iterate the whole process.
Eventually, after finitely many steps, this gives us the required assertions.
\end{proof}

Now we suppose that $r_A+K_S^2\leqslant 3$.
Since $\mathrm{rk}\,\mathrm{Pic}(S)=10-K_S^2$,
the face $\Delta_{A}$ has large codimension in $\overline{\mathbb{NE}(S)}$.
Thus, by Lemma~\ref{lemma:Sakai}, the nef part of the Zariski decomposition of the divisor  $K_S+\mu_AA$ is trivial,
and there exists a birational morphism $g\colon S\to\overline{S}$ such that $\overline{S}$ is a smooth del Pezzo surface,
$g$ contracts $r_A$ smooth rational curves, and
$$
\mu_AA\sim_{\mathbb{Q}} -K_S+\sum_{i=1}^{r_A}a_iE_i,
$$
where $E_1,\ldots,E_{r_A}$ are $g$-exceptional curves,
and $a_1,\ldots,a_{r_A}$ are positive rational numbers.
Observe also that $K_{\overline{S}}^2=r_A+K_S^2$, so that $r_A+K_S^2\geqslant 1$.

Finally, we suppose that $S$ satisfies \eqref{equation:condition}.
Then~the curves $E_1,\ldots,E_{r_A}$ must be disjoint, so that $E_1^2=E_2^2=\cdots=E_{r_A}^2=-1$.
To prove Theorem~\ref{theorem:main}, we have to show that $S$ does not contain $A$-polar cylinders.
Suppose that this is not the case.
Then there is an effective $\mathbb{Q}$-divisor $D$ on the surface $S$ such that $S\setminus\mathrm{Supp}(D)$ is a cylinder,
and $D\sim_{\mathbb{Q}} A$.
This contradicts Lemmas~\ref{lemma:1}, \ref{lemma:2} and \ref{lemma:3}, because $r_A+K_S^2\in\{1,2,3\}$.


\begin{thebibliography}{99}

\bibitem{Cheltsov}
I.~Cheltsov, \emph{Del Pezzo surfaces and local inequalities}, Springer Proceedings in Mathematics and Statistics \textbf{79} (2014), 83--101.

\bibitem{CheltsovParkWon1}
I.~Cheltsov, J.~Park, J.~Won, \emph{Affine cones over smooth cubic surfaces}, Jour. of the European Math. Soc. \textbf{18} (2016), 1537--1564.

\bibitem{CheltsovParkWon3}
I.~Cheltsov, J.~Park, J.~Won, \emph{Cylinders in del Pezzo surfaces}, Inter. Math. Research Notices \textbf{2017} (2017), pages 1179--1230.

\bibitem{Ciliberto}
C.~Ciliberto, B.~Harbourne, R.~Miranda, J.~Ro\'e, \emph{Variations on Nagata's conjecture}, Clay Mathematics Proceedings \textbf{18} (2013), 185--203.

\bibitem{deFernex}
T.~de~Fernex, \emph{Negative curves on very general blow-ups of $\mathbb{P}^{2}$}, Projective Varieties with Unexpected Properties, 199--207, de Gruyter, Berlin, 2005.

\bibitem{LTT}
B.~Lehmann, S.~Tanimoto, Yu.~Tschinkel, \emph{Balanced line bundles on Fano varieties}, Jour. f\"ur die Reine und Ang. Mathematik, \textbf{743} (2018), 91--131.

\bibitem{KPZ11a}
T.~Kishimoto, Yu.~Prokhorov, M.~Zaidenberg, \emph{Group actions on affine cones}, CRM Procedings Lecture Notes \textbf{54} (2011), 123--163.

\bibitem{KPZ12a}
T.~Kishimoto, Yu.~Prokhorov, M.~Zaidenberg, \emph{$\mathbb{G}_a$-actions on affine cones}, Transf. Groups \textbf{18} (2013), 1137--1153.

\bibitem{KPZ12b}
T.~Kishimoto, Yu.~Prokhorov, M.~Zaidenberg, \emph{Unipotent group actions on del Pezzo cones}, Algebraic Geometry \textbf{1} (2014),  46--56.

\bibitem{KollarMori98}
J.~Koll\'ar, S.~Mori, \emph{Birational Geometry of Algebraic Varieties}, Cambridge University Press (1998).

\bibitem{MarquandWon}
L.~Marquand, J.~Won, \emph{Cylinders in rational surfaces}, Eur. Jour. Math. \textbf{4} (2018), 1161--1196.

\bibitem{Prokhorov}
Yu.~Prokhorov, \emph{On the Zariski decomposition problem}, Proc. Steklov Inst. Math. \textbf{240} (2003), 37--65.

\bibitem{Sakai}
F.~Sakai, \emph{On polarized normal surfaces}, Manuscripta Math. \textbf{59} (1987), 109--127.
\end{thebibliography}
\end{document}